\newtheorem{thm}{Theorem}[section]
\newtheorem{problem}[thm]{Problem}
\newtheorem*{thm*}{Theorem}
\theoremstyle{definition}
\newtheorem{definition}[thm]{Definition}
\newtheorem{example}[thm]{Example}
\theoremstyle{remark}
\newtheorem{remark}[thm]{Remark}
\newtheorem{convention}[thm]{Convention}
\numberwithin{equation}{section}
\newcommand{\cald}{\mathcal{D}}
\newcommand{\calg}{\mathcal{G}}
\newcommand{\za}{\alpha}
\newcommand{\zb}{\beta}
\newcommand{\zg}{\gamma}
\newcommand{\zo}{\omega}
\DeclareMathOperator{\Match}{Match}
\begin{document}

\title{Perfect matching problems in cluster algebras and number theory} 
\author{Ralf Schiffler}
\address{Department of Mathematics, University of Connecticut, Storrs, CT 06269-1009, USA}
\email{schiffler@math.uconn.edu}
\thanks{The author was supported by the NSF grant  DMS-2054561.}

\subjclass[2010]{Primary  06A07, 11J06, 05C70, 13F60} 
\setcounter{tocdepth}{1}
\begin{abstract}

This paper is a slightly extended version of the talk I gave at the Open Problems in Algebraic Combinatorics conference at the University of Minnesota in May 2022.
 We introduce two strict order relations on lattice paths  and formulate several open problems. The topic is related to Markov numbers, the Lagrange spectrum, snake graphs and the cluster algebra of the once punctured torus.

Our lattice paths are required to proceed by North and East steps and never go over the diagonal. 
To define the order relations, we first construct a snake graph $\calg(\zo)$ and a band graph $\overline{\calg(\zo)}$ for every such lattice path $\zo$. The  first order relation $<_M$ is given by the number of perfect matchings of the snake graphs. The second order relation $<_L$ is given by the Lagrange number of a quadratic irrational associated to the band graph.

\end{abstract}

\maketitle
\tableofcontents
\section{Introduction}
We introduce two strict order relations on lattice paths $\zo$ that proceed by North and East steps and never go over the diagonal. 
To define the order relations, we first construct a snake graph $\calg(\zo)$ and a band graph $\overline{\calg(\zo)}$ for every lattice path $\zo$. 

Snake graphs and band graphs arise in the theory of cluster algebras from surfaces, where they are used to compute the Laurent expansion of the cluster variables and to construct canonical bases of the cluster algebra. A survey of these results is given in \cite{S}.

The lattice path snake graphs in this paper form a special class of snake graphs. Since the lattice can be thought of the universal cover of the torus with one puncture, (a slight deformation of) the lattice path corresponds to an element of the cluster algebra of the once-punctured torus. In particular, the Christoffel paths correspond to the cluster variables. 

The cluster variables are related to the Markov numbers. In our setting this relation can be expressed by saying that the number of perfect matchings of the snake graph of a Christoffel path is a Markov number and each Markov number arises this way.

A classical theorem of Markov gives a construction of the Lagrange spectrum below 3 in terms of the Markov numbers. Our order relation $<_M$ is related to   Markov numbers and the relation $<_L$ is related to the Lagrange spectrum.

We review  snake graphs and band graphs in section~\ref{sect snakegraphs} and Markov numbers and the Lagrange spectrum in section~\ref{sect Lagrange}. We define the two order relations in  section~\ref{sect main}. In section~\ref{sect known}, we list known results about the order relations and section~\ref{sect open} contains the open problems.

I would like to thank the organizers of the OPAC conference for their invitation and encouragement to write up these notes.

\section{Snake graphs and band graphs}
\label{sect snakegraphs}
Snake graphs and band graphs were used in \cite{MS,MSW,MSW2} to construct canonical bases for cluster algebras from surfaces. The Laurent expansions of the basis elements are parametrized by the perfect matchings of these graphs.
In \cite{MOZ}, some of these results were extended to the super cluster algebra setting by replacing the perfect matchings of snake graphs by double dimer covers. 

\subsection{Snake graphs}
We now recall the definition of snake graphs following \cite{CS4}. 
A \emph{tile} is a planar graph with 4 vertices and 4 edges that has the shape of a square. All tiles will have the same side length. 
A snake graph $\calg$ is a connected planar graph consisting of a finite sequence of tiles $G_1,\ldots, G_d$ that is constructed recursively by the rule that the $i+1$-st tile $G_{i+1}$ shares exactly one edge $e_i$ with the previous tile $G_{i}$, and this edge is either the north edge of $G_i$ and the south edge of $G_{i+1}$ or the east edge of $G_i$ and the west edge of $G_{i+1}$. 
An example is given in Figure \ref{signfigure}.
\begin{figure}
\begin{center}
  {\tiny \scalebox{0.9}{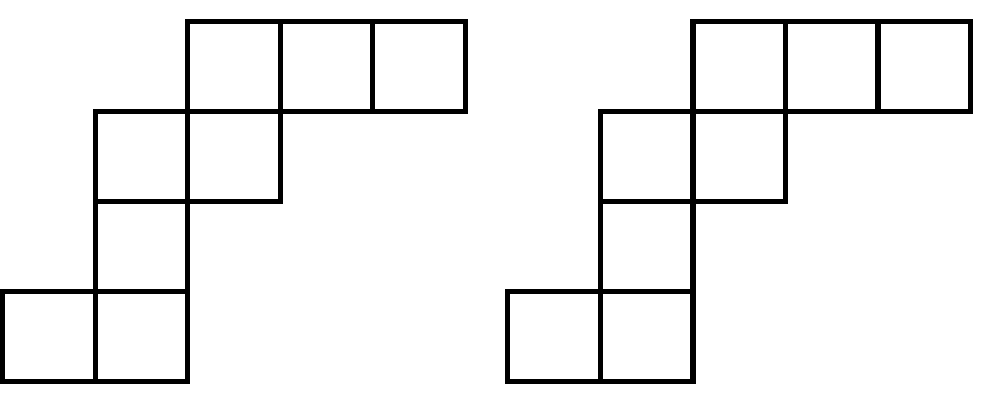}}
 \caption{A snake graph with 8 tiles and 7 interior edges (left);
 a sign function on the same snake graph (right)} 
 \label{signfigure}
\end{center}
\end{figure}

The $d-1$ edges $e_1,e_2, \dots, e_{d-1}$ which are contained in two tiles are called {\em interior edges} of $\calg$ and the other edges are called {\em boundary edges.}  
We denote by  
 $\calg^{N\!E}$ the 2 element set containing the north edge and the east edge of the last tile of $\calg$.

A {\em sign function} $f$ on a snake graph $\calg$ is a map $f$ from the set of edges of $\calg$ to the set $\{ +,- \}$ such that on every tile in $\calg$ the north and the west edge have the same sign, the south and the east edge have the same sign and the sign on the north edge is opposite to the sign on the south edge. See Figure \ref{signfigure} for an example.
%
%

Note that on every snake graph  there are exactly two sign functions. A snake graph is  determined up to symmetry by its sequence of tiles together with a sign function on its interior edges.

\subsection{Band graphs} Every snake graph  $\calg$ gives rise to two \emph{band graphs} $\overline{\calg}$ defined by identifying either
\begin{itemize}
\item [-]  the south edge of the first tile in $\calg$ with the unique edge in $\calg^{N\!E}$ that has the same sign;
\item[-] or the west edge of the first tile in $\calg$ with the unique edge in $\calg^{N\!E}$ that has the same sign.
 \end{itemize}

\subsection{Perfect matchings} A \emph{perfect matching} of a graph $\calg$ is a subset $P$ of the 
edges of $\calg$ such that
each vertex of $\calg$ is incident to exactly one edge of $P$. 
We denote by $\Match\calg$ the set of  perfect matchings of $\calg$.
If $\overline{\calg}$ is a band graph, we define $\Match \overline{\calg}$ to be the set of all perfect matchings $P$ of the snake graph $\calg$ such that $P$ is a perfect matching of $\overline{\calg}$.

\subsection{Relation to continued fractions}
A relation between snake graphs and continued fractions was discovered in \cite{CS4}.

A \emph{finite continued fraction} is a function 
\[[a_1,a_2,\ldots,a_n]= a_1+\cfrac{1}{a_2+\cfrac{1}{\ddots +\cfrac{1}{a_n}}}\]
of $n$ positive integers $a_1,\ldots,a_n$. Notice that $[a_1,\ldots,a_{n-1},1]=[a_1,\ldots,a_{n-1}+1]$.
Similarly, an \emph{infinite continued fraction} is a function 
\[[a_1,a_2,\ldots]= a_1+\cfrac{1}{a_2+\cfrac{1}{a_3+\cfrac{1}{\ddots }}}\]
of infinitely many positive integers $a_1,a_2,\ldots$. 
We use the notation $[\overline{a_1,\ldots, a_n}]$ for the periodic continued fraction $[\overline{a_1,\ldots, a_n}]=[{a_1,\ldots, a_n},{a_1,\ldots, a_n},\ldots]$.
 
 Now let $\calg$ be a snake graph with $d$ tiles and sign function $f$. Let $e_1,\ldots, e_{d-1}$ be the sequence of interior edges, let $e_0$ be the south or west edge of the first tile and let $e_d$ be the north or east edge of the last tile. The sequence of signs $(f(e_i))_{i=0}^d$ is called the sign sequence of $\calg$. It determines the snake $\calg$ completely.

\begin{convention}
{In this paper, we shall always choose the edge $e_0$ to be the west edge of the first tile and the edge $e_d$ to be such that the last two signs in the sign sequence are equal. }
\end{convention}
  
 The sign sequence in the example in Figure \ref{signfigure}  is  $(+,-,-,+,+,+,+,-,-)$.
 \smallskip
 
 To the sign sequence of a snake graph, we also associate a sequence of positive integers $a_1,\ldots,a_n$ that count the lengths of maximal constant subsequences in the sign sequence. In the example, this sequence  $a_1,\ldots,a_n$ is equal to $1,2,4,2$. 
 The sequence $a_1,\ldots,a_n$ uniquely determines the snake graph and we therefore denote the snake graph by $\calg[a_1,\ldots,a_n]$. 
 
It is shown in \cite[Theorems 3.4 and 4.1]{CS4} that 
the construction above gives a bijection between   snake graphs and continued fractions (whose last coefficient is greater than one), such that the continued fraction 
is computed by the following formula
\begin{equation}
\label{eq cf}
 [a_1,\ldots,a_n] =\frac{|\Match(\calg[a_1,\ldots,a_n])|}{|\Match(\calg[a_2,\ldots,a_n])|}.
\end{equation}
This is a combinatorial interpretation of continued fractions in terms of cardinalities of sets of perfect matchings. 

The snake graph in Figure~\ref{signfigure} has 29 perfect matchings, and we have $[a_1,\ldots,a_n]=[1,2,4,2]=\frac{29}{20}$.

\begin{problem}\textup{(Solved)}
 Find an analogue of  (\ref{eq cf}) for band graphs. 
\end{problem} 
This problem has been solved by PJ Apruzzese as follows. 
Given a band graph, associate a sign sequence to it considering only the interior edges, and denote the corresponding integer sequence by $a_0,a_1,\ldots,a_n$ as usual. 
The corresponding periodic continued fraction $[\overline{a_0,a_1,\ldots,a_n}]=[a_0,a_1,\ldots,a_n,a_0,a_1,\ldots,a_n,a_0,a_1,\ldots]$ is equal to a quadratic irrational \[[\overline{a_0,a_1,\ldots,a_n}]= \frac{p-s+\sqrt{(p-s)^2+4rq}}{2q},\] where $p/q=[a_0,a_1,\ldots,a_n], $ and $r/s=[a_0,a_1,\ldots,a_{n-1}]$. Let $D={(p-s)^2+4rq}$ be the radical.
Apruzzese obtains the following two formulas. 
\begin{thm}\label{thm pj}
 \cite{Ap}
Let $\calg[\overline{a_0,a_1,\ldots,a_n}]$ be a band graph. Then
\[
\begin{array}{rcl}
\mid\Match \calg[\overline{a_0,a_1,\ldots,a_n}]\mid &=& p+s 
\\ 
&=& \sqrt{D+4}.
\end{array}
\]
\end{thm}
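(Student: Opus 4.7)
The plan is to establish the two equalities separately: first the combinatorial identity $|\Match \calg[\overline{a_0,\ldots,a_n}]| = p+s$, then the algebraic identity $p+s = \sqrt{D+4}$.

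For the algebraic part I would use the classical $2\times 2$ matrix encoding of continued fractions. Setting $M_i := \bigl(\begin{smallmatrix} a_i & 1 \\ 1 & 0 \end{smallmatrix}\bigr)$, a short induction using the recursion satisfied by snake graph matching counts (implicit in (\ref{eq cf})) gives
\[
M_0 M_1 \cdots M_n \;=\; \begin{pmatrix} |\Match \calg[a_0,\ldots,a_n]| & |\Match \calg[a_0,\ldots,a_{n-1}]| \\ |\Match \calg[a_1,\ldots,a_n]| & |\Match \calg[a_1,\ldots,a_{n-1}]| \end{pmatrix} \;=\; \begin{pmatrix} p & r \\ q & s \end{pmatrix}.
\]
Since $\det M_i = -1$, one has $\det(M_0 \cdots M_n) = (-1)^{n+1}$; the sign-matching requirement at the band graph glue forces $n$ to be odd, hence $\det = +1$ and $ps - rq = 1$. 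Then
\[
(p+s)^2 \;=\; (p-s)^2 + 4ps \;=\; (p-s)^2 + 4rq + 4 \;=\; D+4,
\]
which is the second equality.

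For the first equality, I would realize $\overline{\calg}[\overline{a_0,\ldots,a_n}]$ explicitly as a snake graph $\calg$ with two boundary edges $e$ and $e'$ (at opposite ends) identified. Perfect matchings of $\overline{\calg}$ lift uniquely to perfect matchings $P$ of $\calg$ satisfying $e \in P \Leftrightarrow e' \in P$, so $|\Match\overline{\calg}| = |\calm_+| + |\calm_-|$, where $\calm_\pm$ is the set of matchings of $\calg$ containing both (resp.\ neither) of $e, e'$. For $\calm_+$: forcing $e$ into $P$ makes the matching on the first straight segment of $\calg$ propagate uniquely across its $a_0$-many tiles, and analogously for $e'$ on the last segment; what remains is an unconstrained perfect matching of $\calg[a_1,\ldots,a_{n-1}]$, so $|\calm_+| = s$. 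An analogous local analysis for $\calm_-$ should identify it bijectively with $\Match \calg[a_0,\ldots,a_n]$, giving $|\calm_-| = p$.

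The main obstacle is verifying $|\calm_-| = p$. The constraint ``$e, e' \notin P$'' is subtractive at the endpoints, so it is not immediate that the resulting matchings biject with \emph{all} matchings of $\calg[a_0,\ldots,a_n]$ rather than a proper subset. A clean remedy is a transfer-matrix refinement: for each $\epsilon_L, \epsilon_R \in \{0,1\}$ let $N_{\epsilon_L \epsilon_R}$ be the number of perfect matchings $P$ of $\calg$ with $e \in P \iff \epsilon_L = 1$ and $e' \in P \iff \epsilon_R = 1$; arrange these into a $2 \times 2$ matrix and verify inductively (tile by tile) that this matrix is precisely $M_0 M_1 \cdots M_n$. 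Then $|\Match\overline{\calg}| = N_{00} + N_{11}$ is the trace of $M_0 \cdots M_n$, which is $p+s$, and the combinatorial identity reduces to a linear-algebra computation.
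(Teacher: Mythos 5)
The paper itself offers no proof of this theorem --- it is quoted from Apruzzese's work in preparation \cite{Ap} --- so your proposal can only be judged on its own terms, and unfortunately the combinatorial half contains a fatal error, even though the algebraic half is fine. (The matrix identity, $\det M_i=-1$, and the parity argument are all correct: the maximal runs of a cyclic sign sequence alternate, so their number $n+1$ is even, giving $ps-rq=1$ and $(p+s)^2=(p-s)^2+4rq+4=D+4$.) The false step is the lifting claim that perfect matchings of $\overline{\calg}$ correspond to perfect matchings $P$ of the cut-open snake graph $\calg$ with $e\in P\Leftrightarrow e'\in P$. If $\bar P$ is a perfect matching of $\overline{\calg}$ \emph{not} containing the glued edge, then each of the two identified vertices is covered by a single edge lying on one side of the cut, so $\bar P$ lifts only to a \emph{near}-perfect matching of $\calg$, missing one endpoint of $e$ and one endpoint of $e'$; conversely, a perfect matching of $\calg$ containing neither $e$ nor $e'$ projects to an edge set covering each glued vertex twice. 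So your set $\calm_-$ does not inject into $\Match\overline{\calg}$ at all, and the matchings of $\overline{\calg}$ that use the wrap-around adjacencies without the glued edge are invisible to your transfer matrix.

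A small example makes this quantitative. Take $\overline{\calg(\zo)}=\calg\left[\overline{2,2}\right]$ for $\zo=\textup{RU}$ (four tiles): here $p/q=[2,2]=5/2$, $r/s=[2]=2/1$, so the theorem asserts $|\Match\calg\left[\overline{2,2}\right]|=p+s=6=\sqrt{D+4}$ with $D=32$; this is also what a direct count gives, since the quotient graph has $8$ perfect matchings of which the two that wind around the band using exactly one edge of every tile must be discarded (the paper's definition of $\Match\overline{\calg}$ is shorthand for the good matchings of \cite{MSW2}). Cutting at the glued edge gives the snake graph $\calg[1,2,2]$ with $7$ matchings, and enumeration yields $N_{11}=2$, $N_{00}=1$, $N_{10}=3$, $N_{01}=1$. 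Thus your count is $N_{00}+N_{11}=3\neq 6$; moreover $N_{11}\neq s=1$ and $N_{00}\neq p=5$, refuting the claims $|\calm_+|=s$ and $|\calm_-|=p$, and $\det N=-1\neq\det(M_0M_1)=1$, so the proposed tile-by-tile induction identifying $N$ with $M_0\cdots M_n$ cannot close. Note also that among the six genuine matchings of $\calg\left[\overline{2,2}\right]$, exactly two contain the glued edge, so the split realizing $p+s$ is $2+4$, not $1+5$: a correct proof must decompose good matchings by cutting at a tile carrying two matched edges (which exists precisely because winding matchings are excluded), not by membership of the glued edge.
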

\begin{remark}
The sequence $a_0,a_1,\ldots,a_n$ depends on the choice of the snake graph $\calg$ that realizes the given band graph. Indeed, cutting the band graph an any interior edge will produce such a snake graph. However, the radical $D$ and the sum $p+s$ doesn't depend on this choice.
\end{remark}
\begin{example}
 Starting from the snake graph $\calg[1,2,4,2] $ in Figure~\ref{signfigure}, we can form the two band graphs $\calg\left[\overline{4,4}\right]$ and $\calg\left[\overline{1,2,4,1}\right]$. They are obtained from the snake graph by identifying the south (respectively west) edge of the first tile with the north (respectively east) edge of the last tile.
 
 For $\calg\left[\overline{4,4}\right]$, we have $\frac{p}{q}=[4,4] =\frac{17}{4}$, 
 $\frac{r}{s}=[4] =\frac{4}{1}$, thus $|\Match\calg\left[\overline{4,4}\right]|=p+s=18.$
 Using the radical formula instead, we have $D=(p-s)^2+4rq=16^2-4^3=320$ and thus $\sqrt{D+4}=18$, confirming the result.

 For $\calg\left[\overline{1,2,4,1}\right]$, we have $\frac{p}{q}=[1,2,4,1] =\frac{16}{11}$, 
 $\frac{r}{s}=[1,2,4] =\frac{13}{9}$, and therefore $|\Match\calg\left[\overline{1,2,4,1}\right]|=p+s=25.$
 On the other hand, $D=(p-s)^2+4rq=7^2-4\cdot13\cdot 11=621$ and thus $\sqrt{D+4}=25$, again confirming the result. 
\end{example}

\begin{problem}
 Find an analogue  of (\ref{eq cf})  for double dimer covers.
\end{problem}

%
\section{Markov numbers and  the Lagrange spectrum}\label{sect Lagrange}

In 1879,   Markov studied the equation
\begin{equation}
 \label{eq M} 
 x^2+y^2+z^2=3xyz,
\end{equation}
 now known as the \emph{Markov equation}.  A positive integer solution $(m_1,m_2,m_3)$ of (\ref{eq M}) is called a \emph{Markov triple} and the integers that appear in the Markov triples are called \emph{Markov numbers}. 
 For example $(1,1,1),(1,1,2),(1,2,5),(1,5,13),(2,5,29)$ are Markov triples and $1,2,5,13,29$ are Markov numbers.
 
In 1913, Frobenius conjectured that, for every Markov number $m$, there exists a unique Markov triple in which $m$ is the largest number, see \cite{F}. This uniqueness conjecture is still open today. It has inspired a considerable amount of research and the Markov numbers have important ramifications in number theory, hyperbolic geometry, combinatorics and algebraic geometry. For an overview, we refer to the recent textbooks \cite{A,R}.

  The Markov numbers can be represented in a binary tree called the \emph{Markov tree}. This Markov tree is combinatorially equivalent to the Farey or Stern-Brocot tree of rational numbers. Thus there is a correspondence between $\mathbb{Q}_{[0,1)}$ and the Markov numbers by considering corresponding positions in these trees. We henceforth will write $m_0=1$ and refer to all other Markov numbers as $m_{\frac{b}{a}}$, where $b<a$ are relatively prime positive integers.
For example,
$
m_{\frac{1}{1}}=2,
m_{\frac{1}{2}}=5,
m_{\frac{1}{3}}=13,
m_{\frac{2}{3}}=29$.

The Markov numbers are related to the Lagrange numbers in  approximation theory. Given a real number $\za$, its \emph{Lagrange
number} $L(\za)$ is defined as the supremum of all real numbers $L$ for which there exist infinitely many rational numbers $\frac{p}{q}$ such that 
$|\za-\frac{p}{q} | < \frac{1}{Lq^2}$. Thus the Lagrange number measures how well the real number $\za$ can be approximated by rational numbers.

The \emph{Lagrange spectrum} is defined as the set of all Lagrange numbers $L(\za)$, where $\za$ ranges over all irrational real numbers. Considering it as a subset of the real line, it is known that the Lagrange spectrum is discrete below 3, it is fractal between 3 and a number $F\approx 4.5278$ called the \emph{Freiman number}, and it is continuous above $F$, see Figure~\ref{figlagrange}. We recommend \cite{CF} as reference for the Lagrange spectrum.

 Markov proved the following theorem in 1879.
 
\begin{thm}
 \cite{M} The Lagrange spectrum below 3 is precisely the set of all 
 $\sqrt{9m^2-4} / m $, where $m$ ranges over all Markov numbers.
\end{thm}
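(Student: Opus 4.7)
The plan is to prove Markov's theorem via the continued-fraction characterization of the Lagrange number, combined with a classification of bi-infinite words on $\{1,2\}$, and an explicit evaluation via the band-graph machinery of Section~\ref{sect snakegraphs}. First I would record the standard identity
\[
L(\alpha) \;=\; \limsup_{n \to \infty}\bigl([a_{n+1};a_{n+2},\ldots]\,+\,[0;a_n,a_{n-1},\ldots,a_1]\bigr)
\]
valid for any irrational $\alpha=[a_0;a_1,a_2,\ldots]$. Since $[a;\ldots]\ge 3$ whenever $a\ge 3$, it follows immediately that any $\alpha$ with $L(\alpha)<3$ has all but finitely many partial quotients in $\{1,2\}$; moreover $L(\alpha)$ depends only on the shift-orbit of the associated bi-infinite word in $\{1,2\}^{\mathbb{Z}}$ obtained from the two tails.

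The second, and most substantive, step is the combinatorial classification: I would identify precisely which bi-infinite words $w\in\{1,2\}^{\mathbb{Z}}$ give $\limsup<3$. Through a forbidden-factor analysis (tracking how blocks of consecutive $1$'s and $2$'s must nest into one another so that neither tail alone, nor the interaction between forward and backward tails at infinitely many splitting points, pushes the sum above $3$), one shows that such $w$ is purely periodic and its period is a Christoffel word $C_{b/a}\in\{1,2\}^{a+b}$ indexed by a reduced fraction $b/a$ with $0\le b<a$. The resulting parametrization coincides with the Stern--Brocot labeling of the Markov tree, producing a bijection between admissible shift-orbits and Markov numbers $m_{b/a}$.

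The third step evaluates $L(\alpha_{b/a})$ for the purely periodic quadratic irrational $\alpha_{b/a}$ whose expansion is $w_{b/a}=C_{b/a}^{\infty}$. The sign-sequence data of $C_{b/a}$ is exactly what builds the snake graph $\calg(\zo_{b/a})$ of the corresponding Christoffel path, and its bandification is $\overline{\calg(\zo_{b/a})}$. Two inputs then combine: (i) the fact recalled in the introduction that $|\Match\calg(\zo_{b/a})|=m_{b/a}$, and (ii) Theorem~\ref{thm pj}, according to which $|\Match\overline{\calg(\zo_{b/a})}|=\sqrt{D+4}$, where $D$ is the discriminant of $\alpha_{b/a}$. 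The cluster-algebraic interpretation of the loop around the puncture on the once-punctured torus forces $|\Match\overline{\calg(\zo_{b/a})}|=3m_{b/a}$, so $D=9m_{b/a}^2-4$. Plugging $D$ back into the continued-fraction formula for $L$ on the purely periodic orbit yields $L(\alpha_{b/a})=\sqrt{D}/m_{b/a}=\sqrt{9m_{b/a}^2-4}/m_{b/a}$, and the classification above guarantees that these are the only values attained below $3$.

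The main obstacle is the classification step. The easy half, that $L(\alpha)\ge 3$ as soon as a partial quotient $\ge 3$ occurs infinitely often, follows from a direct bound; the hard half is to prove that inside $\{1,2\}^{\mathbb{Z}}$ only the Christoffel periodic orbits yield $\limsup<3$, and that the indexing of these orbits matches the Stern--Brocot position of $m_{b/a}$ in the Markov tree. Once this combinatorial backbone is in place, the snake-graph matching count together with Theorem~\ref{thm pj} reduces the numerical evaluation of $L$ to the clean identity $D+4=9m^2$.
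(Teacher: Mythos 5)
Note first that the paper does not prove this statement: it is quoted as Markov's classical 1879 theorem with the citation \cite{M}, and the closest the paper comes to a proof is in Theorem~\ref{thm L}, where the heavy lifting is delegated to \cite[Theorem 8.4]{BLRS}. So your proposal must be judged against the classical argument. Your skeleton (Perron's two-sided formula for $L(\alpha)$, reduction to partial quotients in $\{1,2\}$, classification of admissible bi-infinite words, then numerical evaluation) is the right skeleton, but the central step is missing rather than proved. The assertion that a ``forbidden-factor analysis'' shows every $w\in\{1,2\}^{\mathbb Z}$ with $\limsup<3$ is purely periodic with Christoffel period \emph{is} Markov's theorem --- it is exactly the content of \cite[Theorem 8.4]{BLRS} --- and you give no mechanism for it: no identification of the forbidden patterns, no argument why an aperiodic admissible word cannot exist, and no proof that the surviving orbits biject with Stern--Brocot positions. (Two smaller inaccuracies in the same step: the period is not the Christoffel word itself in $\{1,2\}^{a+b}$ but its image under the substitution $x\mapsto 11$, $y\mapsto 22$, equivalently the paper's $2,a_1,\ldots,a_n$ with the $a_i$ read off the path, of length $2(a+b)$ up to the convention; and matching the indexing to the Markov tree involves the shift by the morphism $\mathbf{G}$ that the paper explicitly flags after Theorem~\ref{thm L}.)

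The evaluation step also has an unsupported pivot. You claim the cluster-algebraic interpretation of the loop around the puncture ``forces'' $|\Match\overline{\calg(\zo)}|=3m$; this identity is true (it is the statement that the associated Cohn-type matrix has trace $3m$, equivalently $p+s=3m$ in the notation of Theorem~\ref{thm pj}, whence $D=9m^2-4$), but it is essentially equivalent to the Markov equation itself and must be proved --- e.g., by induction along the Markov tree using the skein/Ptolemy relations, or by the matrix trace computation --- otherwise the argument is circular at precisely the point where the number $3$ enters. Finally, passing from $D=9m^2-4$ to $L=\sqrt{D}/m$ via Remark~\ref{rem lagrange} needs the additional check that among all cyclic shifts of $[\overline{2,a_1,\ldots,a_n}]$ the minimal denominator $q$ of the shifted quadratic irrational equals $m$ (so that $\max(\alpha-\alpha')=\sqrt{D}/m$), and that Perron's $\limsup$, which involves reversed tails $[0;a_n,a_{n-1},\ldots]$, reduces for purely periodic words to this maximum; both are routine for purely periodic expansions but are silently assumed. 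In short: the numerical endgame is salvageable with the stated inputs, but the combinatorial classification --- the actual theorem --- is asserted, not proved.
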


Note that $m<m'$ implies $\sqrt{9m^2-4} / m<\sqrt{(9m')^2-4} / m'$.
The first four Lagrange numbers are therefore
$\sqrt{5},\sqrt{8},\frac{\sqrt{221}}{5}\approx 2.973,\frac{\sqrt{1517}}{13} \approx 2.996.$
\begin{figure}
\begin{center}
\scalebox{0.9}{
\begingroup%
  \makeatletter%
  \providecommand\color[2][]{%
    \errmessage{(Inkscape) Color is used for the text in Inkscape, but the package 'color.sty' is not loaded}%
    \renewcommand\color[2][]{}%
  }%
  \providecommand\transparent[1]{%
    \errmessage{(Inkscape) Transparency is used (non-zero) for the text in Inkscape, but the package 'transparent.sty' is not loaded}%
    \renewcommand\transparent[1]{}%
  }%
  \providecommand\rotatebox[2]{#2}%
  \newcommand*\fsize{\dimexpr\f@size pt\relax}%
  \newcommand*\lineheight[1]{\fontsize{\fsize}{#1\fsize}\selectfont}%
  \ifx\svgwidth\undefined%
    \setlength{\unitlength}{386.24319752bp}%
    \ifx\svgscale\undefined%
      \relax%
    \else%
      \setlength{\unitlength}{\unitlength * \real{\svgscale}}%
    \fi%
  \else%
    \setlength{\unitlength}{\svgwidth}%
  \fi%
  \global\let\svgwidth\undefined%
  \global\let\svgscale\undefined%
  \makeatother%
  \begin{picture}(1,0.11200641)%
    \lineheight{1}%
    \setlength\tabcolsep{0pt}%
    \put(0,0){\includegraphics[width=\unitlength,page=1]{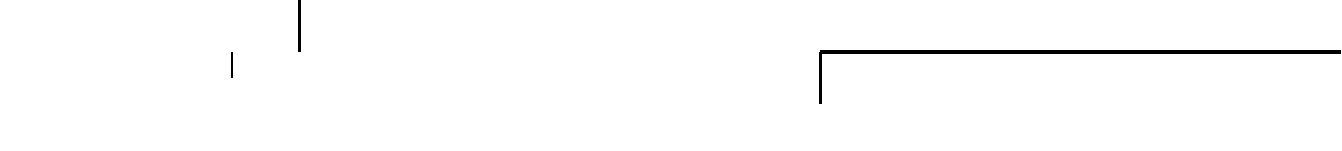}}%
    \put(0.21552015,0.00520841){\makebox(0,0)[lt]{\lineheight{1.25}\smash{\begin{tabular}[t]{l}3\end{tabular}}}}%
    \put(0.54562331,0.00520841){\makebox(0,0)[lt]{\lineheight{1.25}\smash{\begin{tabular}[t]{l}$F\approx 4.5278$\end{tabular}}}}%
    \put(0.39222227,0.08676321){\makebox(0,0)[lt]{\lineheight{1.25}\smash{\begin{tabular}[t]{l}fractal\end{tabular}}}}%
    \put(0.77086938,0.08676321){\makebox(0,0)[lt]{\lineheight{1.25}\smash{\begin{tabular}[t]{l}continuous\end{tabular}}}}%
    \put(0.07959552,0.08676321){\makebox(0,0)[lt]{\lineheight{1.25}\smash{\begin{tabular}[t]{l}discrete\end{tabular}}}}%
    \put(0.14367421,0.00909206){\makebox(0,0)[lt]{\lineheight{1.25}\smash{\begin{tabular}[t]{l}$\sqrt{8}$\end{tabular}}}}%
    \put(-0.00195948,0.0071502){\makebox(0,0)[lt]{\lineheight{1.25}\smash{\begin{tabular}[t]{l}$\sqrt{5}$\end{tabular}}}}%
    \put(0,0){\includegraphics[width=\unitlength,page=2]{figlagrangespectrum.pdf}}%
  \end{picture}%
\endgroup%
}
\caption{A schematic picture of the Lagrange spectrum}
\label{figlagrange}
\end{center}
\end{figure}

%
\section{Lattice path snake graphs}\label{sect main}

Let $a,b$ be relatively prime integers such that $0<b<a$. We denote by $\cald(a,b)$  the set of all lattice paths $\zo$ from $(0,0)$ to $(a,b)$ that proceed by North (Up) and East (Right) steps and never go above the diagonal. 
Let $\cald=\cup_{(a,b)} \cald(a,b)$.

\begin{definition}
 For $\zo\in\cald(a,b)$, we construct a snake graph $\calg(\zo)$ and a band graph $\overline{\calg(\zo)}$ as follows. An example is shown in Figure~\ref{figdefex}.

First place tiles of side length 1/2 along $\zo$. Then do one of the following.
\begin{itemize}
\item [-] To construct $\calg(\zo)$, remove the first and the last tile.
\item [-] To construct $\overline{\calg(\zo)}$, add one tile on top of the last tile and glue the left vertical edge of the first tile to the right vertical edge of the last tile. 
 \end{itemize}
\end{definition}

\begin{figure}\begin{center}
\tiny\scalebox{1.2}{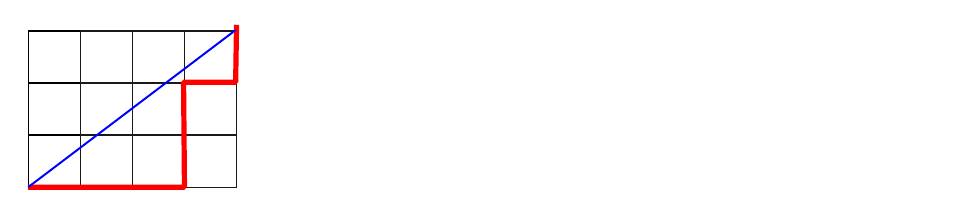}
\caption{The lattice path $\zo=\textup{RRRUURU}$ in $\cald(4,3)$ is shown in the picture on the left. The center picture shows its snake graph $\calg(\zo)$, and the picture on the right shows its band graph $\overline{\calg(\zo)}$, where the gluing edge is indicated by bullet points. }
\label{figdefex}
\end{center}
\end{figure}
In the example in Figure~\ref{figdefex}, we have 
$\zo=\textup{RRRUURU}$,
\[\calg(\zo)=\calg[1,1,1,1,2,1,1,2,2]
 \quad\textup{ and } \quad
\overline{\calg(\zo)}=\calg[\overline{2,1,1,1,1,2,1,1,2,2}] .\]
For an arbitrary lattice path $\zo\in\cald$, if  $\calg(\zo)=\calg[a_1,a_2,\ldots,a_n]$ then 
$\overline{\calg(\zo)}=\calg[\overline{2, a_1,a_2,\ldots,a_n}]$. 

Alternatively, we can construct the sequence of integers $a_1,\ldots ,a_n$ directly from the sequence of steps in $\zo$ by associating 
the subsequence $1,1$ to every subsequence RR  and every subsequence DD in $\zo$; and associating the subsequence 2 to every subsequence RU  and every subsequence UR in $\zo$.  

\begin{remark}
 These graphs are significant in the cluster algebra of the once-punctured torus. Via the covering map from the lattice to the torus, a slight deformation of the  path $\zo$ avoiding all intermediate lattice points corresponds to an arc  $\zg$ on the torus that starts and ends at the puncture.  This arc has selfcrossings unless it is the diagonal line from $(0,0)$ to $(a,b)$ and $\zo$ is the path closest to the diagonal.  The arcs without selfcrossings correspond to  cluster variables and the selfcrossing arcs to so-called generalized cluster variables. In both cases their Laurent expansion is computed by the snake graph $\calg(\zo)$. 
 The band graph $\overline{\calg(\zo)}$ computes the Laurent expansion a the closed curve $\overline{\zg}$ obtained from $\zg$ by moving its endpoints infinitesimally away from the puncture.
\end{remark}
\subsection{Two strict partial orders on lattice paths}
The following definition associates  two numbers to every lattice path $\zo\in\cald$: an integer $M(\zo)$ and a real number $L(\zo)$.
\begin{definition}
 Let $\zo\in\cald$ be a lattice path and $\calg(\zo)=\calg[a_1,\ldots,a_n]$ its snake graph.
 Define
 \[
\begin{array}{rcl}
  M(\zo) & =  & |\Match \calg(\zo)| = \textup{Numerator of }[a_1,\ldots, a_n] ;  \\ \\
 L(\zo) & =  & \textup{Lagrange number of } [\overline{2, a_1,\ldots,a_n}]   .
\end{array}
\]
\end{definition}
\begin{remark} \label{rem lagrange}
  The number $L(\zo)$ can be computed by the formula $L(\zo)=\max(\za-\za')$, where $\za$ runs over all cyclic shifts of  $[\overline{2, a_1,\ldots,a_n}] $ and $\za'$ is the conjugate of $\za$,
see \cite[Proposition 1.29]{A}. Recall that the conjugate of a quadratic irrational $\frac{a+\sqrt{D}}{b}$ is defined as  $\frac{a-\sqrt{D}}{b}$.
\end{remark}
This definition allows us to introduce two strict partial orders on lattice paths. Recall that a strict partial order is a relation $<$ that is irreflexive ($a\not < a$), asymmetric  and transitive.

\begin{definition}
 \label{def partial order}
 Let $\zo,\zo'\in \cald$ be two lattice paths. 
 
 (a) The \emph{matching order} relation $<_M$ is defined by 
 $\zo <_M \zo'$ if $M(\zo)<M(\zo')$.
 
 (b) The \emph{Lagrange order} relation $<_L$ is defined by 
 $\zo <_L \zo'$ if $L(\zo)<L(\zo')$.
 \end{definition}

\begin{remark}
 The order relations are strict partial orders $<$ instead of non-strict partial orders $\le$, because there can be two different lattice paths $\zo\ne \zo'$ such that $M(\zo)=M(\zo')$. Thus the non-strict partial order would not satisfy the antisymmetry condition.
\end{remark}
\begin{remark}
 By definition, any two lattice paths $\zo,\zo' $ are comparable under $<_M$, or $<_L$, unless $M(\zo)=M(\zo')$, or $L(\zo)=L(\zo')$, respectively.
\end{remark}
\begin{problem}
 Study the posets $\cald(a,b)$ and $ \cald$ with respect to $<_L$ and $<_M$.
\end{problem}
\section{Known results}\label{sect known}
The path in $\cald(a,b)$ that is closest to the diagonal is called the \emph{Christoffel path} of slope $b/a$.

\begin{thm} \label{thm M}

Let $\zo_0\in\cald(a,b)$ denote the Christoffel path.
Then $M(\zo_0)$ is the Markov number $m_{\frac{b}{a}}$.
Moreover $\zo_0$ is the unique minimal element in the poset $(\cald(a,b),<_M)$.
%

\end{thm}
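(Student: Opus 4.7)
The theorem has two parts, and I will treat them separately. For the identity $M(\zo_0)=m_{b/a}$, my plan is to invoke the interpretation of lattice-path snake graphs in the cluster algebra of the once-punctured torus sketched in the remark preceding the theorem. A small perturbation of the Christoffel path of slope $b/a$ projects to a non-self-intersecting arc $\gamma$ at the puncture, hence to an (ordinary) cluster variable $x_\gamma$. By the perfect-matching formula of Musiker--Schiffler--Williams, $x_\gamma$ is a Laurent polynomial with terms indexed by $\Match\calg(\zo_0)$; upon specializing the initial cluster to $x_1=x_2=x_3=1$, each matching contributes $1$, so the specialized value is $|\Match\calg(\zo_0)|=M(\zo_0)$. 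Under this same specialization, the cluster variables of the once-punctured torus are precisely the Markov numbers, and the Farey/Stern--Brocot indexing of these cluster variables matches the labelling $m_{b/a}$ recalled in Section~\ref{sect Lagrange}. Equivalently, and without appealing to cluster machinery, one may read off $(a_1,\ldots,a_n)$ defining $\calg(\zo_0)$ directly from the Christoffel word, verify that this sequence coincides with the Markov continued fraction of $b/a$ (see e.g.~\cite{A}), and then apply formula~(\ref{eq cf}).

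For the second assertion, the plan is to introduce the dominance order $\leq_d$ on $\cald(a,b)$ in which $\zo\leq_d\zo'$ iff, at every abscissa, the height reached by $\zo$ is at most the height reached by $\zo'$. This makes $(\cald(a,b),\leq_d)$ a distributive lattice whose maximum is $\zo_0$, and its cover relations are the elementary flips that replace an adjacent substring $\mathrm{UR}$ inside a path by $\mathrm{RU}$. Since any such flip moves the path downward, it cannot violate the ``below the diagonal'' constraint, so every elementary flip lands in $\cald(a,b)$; moreover, every $\zo\ne\zo_0$ in $\cald(a,b)$ is reachable from $\zo_0$ by a finite chain of these flips. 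The second assertion therefore reduces to the following monotonicity claim: \emph{every elementary flip $\mathrm{UR}\to\mathrm{RU}$ strictly increases $M$.}

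To prove this monotonicity, I would examine how the integer sequence $(a_1,\ldots,a_n)$ associated to $\calg(\zo)$ changes under a flip at position $i$. The flip only alters the three consecutive step-pairs $(s_{i-1},U),(U,R),(R,s_{i+2})$, producing a localized modification of $(a_1,\ldots,a_n)$. There are four cases according to $s_{i-1},s_{i+2}\in\{R,U\}$ (with obvious conventions when the flip lies at an endpoint), each of which is an explicit substitution, for instance $2,2,2\mapsto 1,1,2,1,1$ in the $(R,U)$ case. Encoding the continued fraction by the matrix product $M_{a_1}\cdots M_{a_n}$ with $M_a=\left(\begin{smallmatrix}a&1\\1&0\end{smallmatrix}\right)$ reduces the claim to showing that the $(1,1)$-entry of the product strictly grows in each case; this can be checked by computing the local factors and combining with the non-negativity of the surrounding context matrices. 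I expect the main obstacle to be this case-by-case bookkeeping, especially at the boundary of $\zo$; a cleaner alternative would be to exhibit, for each flip $\zo\to\zo'$, an explicit injection $\Match\calg(\zo)\hookrightarrow\Match\calg(\zo')$ that misses at least one matching, yielding a purely combinatorial proof of strict monotonicity.
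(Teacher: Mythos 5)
Your first part is essentially the paper's own argument: the paper proves $M(\zo_0)=m_{\frac{b}{a}}$ by citing the correspondence between Markov numbers and the cluster algebra of the once-punctured torus \cite{BBH,P}, the perfect-matching expansion formula of \cite{MSW}, and the description of the snake graph from \cite{CS5}. Your cluster-variable specialization argument, as well as your alternative route through the Christoffel word and formula~(\ref{eq cf}), are both faithful to this and fine.

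The second part, however, has a genuine gap. You reduce unique minimality to the claim that \emph{every} elementary flip $\mathrm{UR}\to\mathrm{RU}$ strictly increases $M$, but you never prove this claim --- you only sketch a matrix-product strategy and concede that the case-by-case bookkeeping is ``the main obstacle,'' offering an unconstructed injection as a fallback. This is not a safe deferral: the single-flip comparison is precisely the question this paper poses as an \emph{open} first step towards Problem~\ref{prob 5} (``one can ask if $\zo$ is smaller than $\zo'$ for both orders if there exist lattice paths $\za,\zb$ such that $\zo=\za\mathrm{RU}\zb$ and $\zo'=\za\mathrm{UR}\zb$''). Your proposed local analysis is also more delicate than you suggest: a flip does not merely replace one factor $M_{a_i}$ by another of the same shape --- it merges or splits runs of $1$'s and $2$'s, changing the length $n$ of the sequence, and numerators of continued fractions are not monotone under arbitrary local substitutions. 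The paper's own example of two distinct paths $\zo,\zo'\in\cald(7,6)$ with $M(\zo)=M(\zo')=40199$ shows how subtle such comparisons are. Note in particular that flip-monotonicity is strictly stronger than the theorem itself, since it would embed the whole dominance order into $<_M$; the paper instead gets the (weaker) unique-minimality statement directly from \cite[Theorem 3.5]{LLRS}, which compares the Christoffel path with an arbitrary path in $\cald(a,b)$. To repair your proof you would either need to carry out the full case analysis (including the boundary cases and the run-merging cases) with actual inequalities, or fall back on citing \cite{LLRS} as the paper does. Your lattice-theoretic framing itself (the Christoffel path is the dominance maximum, covers are single flips, flips preserve membership in $\cald(a,b)$, and every path is reachable) is correct; only the monotonicity engine is missing, and it is the whole content of the assertion.
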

\begin{proof}
This  follows from the work of several people. A correspondence between Markov numbers and the cluster algebra of a torus with one puncture was obtained in \cite{BBH,P}. Therefore the equation $M(w_0)=m_{\frac{b}{a}}$ follows from the  expansion formula for cluster variables in \cite{MSW}. The description of the snake graph $M(\zo_0)$ we use here first appeared in  \cite{CS5}.
The fact that $w_0$ is the unique minimal element follows from \cite[Theorem 3.5]{LLRS}.
%
%
%
%
%
%
\end{proof}

\begin{thm}
 \label{thm L}
Let $\zo\in\cald$. Then 
$L(\zo)<3$ if and only if $\zo$ is a Christoffel path.
In that case, $L(\zo)=\sqrt{9 m^2-4}/m$ as in Markov's theorem, where $m=M(\zo)$ is the Markov number of the Christoffel path. In particular, the Christoffel path is the unique minimal element in the poset $(\cald(a,b),<_L)$.
\end{thm}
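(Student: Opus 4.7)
The plan is to reduce the theorem to Markov's classical characterization of the Lagrange spectrum below 3, translated through the snake/band graph dictionary of Sections~\ref{sect snakegraphs}--\ref{sect main}. Every tile run in $\calg(\zo)$ arises from the RR/UU/RU/UR rule described after Figure~\ref{figdefex}, so the period $(2,a_1,\ldots,a_n)$ of the continued fraction $[\overline{2,a_1,\ldots,a_n}]$ attached to $\overline{\calg(\zo)}$ is automatically a word in $\{1,2\}$. The classical input I would cite is the strong form of Markov's theorem (Markov \cite{M}, reformulated in the language of continued fractions by Cohn and others; see \cite{A,CF}): a purely periodic continued fraction $[\overline{c_1,\ldots,c_k}]$ with $c_i\in\{1,2\}$ has Lagrange number strictly below $3$ if and only if, as a cyclic word, its period is the image of a Christoffel word under the substitution $RR,UU\mapsto 1,1$ and $RU,UR\mapsto 2$; and in that case the Lagrange number equals $\sqrt{9m^2-4}/m$, where $m$ is the Markov number attached to that Christoffel word.

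For the forward direction, I would take a Christoffel path $\zo_0\in\cald(a,b)$ and observe that the construction of $\calg(\zo_0)$, together with the extra leading tile that defines the band graph $\overline{\calg(\zo_0)}$, produces exactly the cyclic word $(2,a_1,\ldots,a_n)$ obtained from $\zo_0$ by the substitution above. Hence $[\overline{2,a_1,\ldots,a_n}]$ is a Markov quadratic irrational. By Theorem~\ref{thm M} the associated Markov number is $m=M(\zo_0)=m_{b/a}$, so Markov's theorem yields $L(\zo_0)=\sqrt{9m^2-4}/m<3$. One can additionally double-check this numerically against Remark~\ref{rem lagrange} by writing $L(\zo_0)=\max(\za-\za')$ over the cyclic shifts of $[\overline{2,a_1,\ldots,a_n}]$ and combining with Apruzzese's formula $p+s=\sqrt{D+4}$ from Theorem~\ref{thm pj}.

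For the converse, suppose $\zo\in\cald(a,b)$ is not a Christoffel path. The goal is to show that the cyclic word $(2,a_1,\ldots,a_n)$ is \emph{not} a Markov sequence in the sense above; Markov's theorem then gives $L(\zo)\geq 3$. The step that I expect to be the main obstacle is precisely this combinatorial injectivity: I need to verify that the substitution $\zo\mapsto (2,a_1,\ldots,a_n)$ is injective on the relevant cyclic classes in $\cald(a,b)$, so that a non-Christoffel path cannot secretly produce the Markov period of some $m_{b'/a'}$. The cleanest way I see is to reconstruct $\zo$ from the cyclic word by inverting the RR/UU/RU/UR rule modulo the initial marker $2$ supplied by the band-graph gluing, and then to invoke the classical characterization of Christoffel words as the unique lattice paths whose associated $\{1,2\}$-word (read cyclically from the gluing edge) is a Christoffel word on $\{1,2\}$.

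Finally, the minimality statement follows immediately: for any non-Christoffel $\zo\in\cald(a,b)$ we have $L(\zo)\geq 3>L(\zo_0)$, so $\zo_0<_L \zo$, and since the Lagrange spectrum is discrete below $3$ with $\sqrt{9m^2-4}/m$ uniquely determined by $m$, the Christoffel path is the unique minimal element of $(\cald(a,b),<_L)$.
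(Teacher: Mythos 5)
Your overall strategy coincides with the paper's: both reduce the theorem to Markov's characterization of the spectrum below $3$ in its Christoffel-word form, identify $L(\zo)$ with the Markov quantity of the periodic sequence $2,a_1,\ldots,a_n$ (the paper does this via Remark~\ref{rem lagrange}), and obtain unique minimality from the fact that the Christoffel path is the only path in $\cald(a,b)$ with $L(\zo)<3$. The difference is that the paper fills in the step you flag as the main obstacle by citation: it invokes \cite[Theorem 8.4]{BLRS}, which is already an if-and-only-if statement for bi-infinite sequences ($M(A)<3$ exactly when $A$ is the bi-infinite repetition of a suitably encoded Christoffel word), together with \cite[Lemma 2.2]{BLRS}, whose morphism $\mathbf{G}$ (sending the Christoffel path of $\cald(a,b)$ to that of $\cald(a+b,b)$) repairs precisely the mismatch between the word appearing in the classical theorem and the lattice path $\zo$.

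That mismatch is where your sketch goes wrong. The ``strong form of Markov's theorem'' you propose to cite is not literally in Markov, Cohn, \cite{A} or \cite{CF}: the classical statement produces the admissible periods by applying the letterwise doubling morphism $1\mapsto 11$, $2\mapsto 22$ to a Christoffel word, whereas your substitution $\textup{RR},\textup{UU}\mapsto 1,1$ and $\textup{RU},\textup{UR}\mapsto 2$ is a sliding-window rule on $\zo$ itself. The two agree only after replacing the classical Christoffel word $\zo'$ by $\zo=\mathbf{G}(\zo')$; for instance the Christoffel path $\textup{RRU}\in\cald(2,1)$ yields the cyclic word $(2,1,1,2)\sim(1,1,2,2)$, which is the doubling of the Christoffel word for $\cald(1,1)$, not an encoding of $\textup{RRU}$ directly. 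For the same reason, your ``classical characterization of Christoffel words as the unique lattice paths whose associated $\{1,2\}$-word is a Christoffel word on $\{1,2\}$'' is false as stated: the associated word of a Christoffel path is a \emph{doubled} Christoffel word attached to a smaller rectangle. Note also that the injectivity you single out as the main obstacle is actually the easy part: since every $\zo\in\cald$ must begin with an R step, the sequence $a_1,\ldots,a_n$ determines $\zo$ (each entry $2$ marks a corner, each block $1,1$ a repeated step). The genuine content is the dictionary between Christoffel paths and doubled Christoffel periods, i.e.\ the $\mathbf{G}$-lemma, which your proposal neither proves nor correctly locates in the literature; this is exactly what the paper imports from \cite{BLRS}. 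Your forward direction (using Theorem~\ref{thm M} to identify $m$) and the final minimality argument are fine and match the paper's.
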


\begin{proof}
 
 This follows from  a more general result by Markov which is restated (and proved) in terms of Christoffel words in \cite[Theorem 8.4]{BLRS}. The theorem states that for any bi-infinite sequence $A$ of positive integers, a certain quantity $M(A)<3$ if and only if $A$ is the bi-infinite repetition of a Christoffel word $\zo'$. Moreover, in that case,  $M(A)=\sqrt{rc^2-4}/c$, where $c$ is a Markov number associated to $\zo'$. 

Our setting is a special situation of this result, because we take the sequence $A$ to be the bi-infinite sequence $\ldots,2,a_1,\ldots,a_n,2,a_1,\ldots,a_n,\ldots$, where the band graph $\overline{\calg(\zo)}$ is $\calg[\overline{2,a_1,\ldots,a_n}]$. Then the quantity $M(A)$ becomes $\max(\za-\za')$, where $\za$ runs over all cyclic shifts of $[\overline{2,a_1,\ldots,a_n}]$. Thus $M(A)=L(\zo)$ according to Remark~\ref{rem lagrange}. 

We should point out that there is a slight difference in notation here. The $\zo'$ in \cite{BLRS} is not the same as the $\zo$ in our setting. Instead, $\zo$ is the image of $\zo'$ under the morphism $\mathbf{G}$ in \cite[Lemma 2.2]{BLRS}, which maps the Christoffel path in $\cald(a,b)$ to the Christoffel path in $\cald(a+b,b)$.

The statement that the Christoffel path is the unique minimal element in $(\cald(a,b),<_L)$ follows because the Christoffel path is the only path $\zo \in\cald(a,b)$ for which $L(\zo)<3$.
\end{proof}

\section{Open problems}\label{sect open} In this section, we give a list of open problems and some of them are followed by comments. 
Throughout we denote by $\zo_0(a,b)$ the Christoffel path in $\cald(a,b)$.

\medskip

\begin{problem}\label{prob 1}
Show that the unique maximal element in $\cald(a,b)$ with respect to both partial orders is the $\righthalfcup$-shape path $\zo=\textup{RR}\cdots\textup{RUU}\cdots \textup{U}
=\textup{R}^a\textup{U}^b$.
\end{problem}
This path $\zo$ first runs horizontally along the lower boundary of the $(a,b)$-rectangle and then runs vertically along the right boundary of the rectangle. This is the  path furthest from the diagonal.

\medskip

\begin{problem}\label{prob 2}
 
 Consider two lattice points $(a,b), (a',b')$ and their respective Christoffel paths  $\zo_0(a,b)$ and $\zo_0(a',b')$. Is it true that
\begin{enumerate}
\item [(a)]
$M(\zo_0(a,b))=M(\zo_0(a',b'))$ if and only if $ (a,b)=(a',b') $?
\item[(b)] $L(\zo_0(a,b))=L(\zo_0(a',b')) $ if and only if $ (a,b)=(a',b') ?$

\end{enumerate}
\end{problem}
Part (a) is equivalent to the Uniqueness Conjecture, because of Theorem~\ref{thm M}. 

For part (b), let us point out that two irrational numbers $\za,\zb$ are said to be equivalent if their continued fractions eventually coincide.  It is known that equivalent irrational numbers have the same Lagrange number. This is the case for the cyclic shifts of $[\overline{2,a_1,\ldots,a_n}]$ that occur in the definition of the $L(w_0(a,b))$ when the point $(a,b)$ is fixed.  
Note however, that in this problem we are dealing with two different points $(a,b)$ and $(a',b')$. 

It is a conjecture that $\za$ and $\zb$ are equivalent irrational numbers if and only if the Lagrange numbers $L(\za)$ and $ L(\zb)$ are equal. This conjecture is known to be equivalent to the Uniqueness Conjecture, see for example \cite[page 39]{A}.   

\medskip

\begin{problem}\label{prob 3}
  If $\zo,\zo'\in\cald(a,b)$ is it true that
  
$$L(\zo)=L(\zo') \quad \textup{ implies }\quad \overline\calg(\zo)\cong\overline\calg(\zo')?$$
\end{problem}
Note that the previous Problem \ref{prob 2} is not a special case of this problem, since there we  compared paths with different endpoints, whereas here the paths all have the same endpoint. 

 In Figure~\ref{sameLvalue}, we give an example of two lattice paths $\zo_1=$RRRRUURRRUURU and $\zo_2=$ RRRRURUURRRUU in $\cald(8,6)$ such that $L(\zo_1)=L(\zo_2)$. In this case, the corresponding band graphs 

\[\begin{array}{rcl}
\overline{\calg(\zo_1)}&=&\calg[\overline{2,1, 1, 1, 1, 1, 1, 2, 1, 1, 2, 1, 1, 1, 1, 2, 1, 1, 2, 2}] \\ 
\overline{\calg(\zo_2)}&=&\calg[\overline{2,1, 1, 1, 1, 1, 1, 2, 2, 2, 1, 1, 2, 1, 1, 1, 1, 2, 1, 1}] \\
\end{array}\]
are isomorphic, which can be seen from the fact that one obtains the periodic integer sequence of the second graph by the one of the first graph by reversing the order and shifting. For completeness, we also note that $M(\zo_1)=49396$ is not equal to $M(\zo_2)=46900$.
\begin{figure}\begin{center}
\tiny\scalebox{1.2}{\input{sameLvalue.pdf_tex}}
\caption{Two lattice paths $\zo_1,\zo_2$ in $\cald(8,5)$ such that $L(\zo_1)=L(\zo_2)$. The snake graphs $\overline{\calg(\zo_1)} $ and $\overline{\calg(\zo_2) }$ are isomorphic.}
\label{sameLvalue}
\end{center}
\end{figure}

\medskip

On the other hand, let us also mention  that the statement $(M(\zo)=M(\zo'))\Rightarrow (\calg(\zo)\cong\calg(\zo'))$
is false! For example the lattice paths $\zo=$ RRURURRURRUUU  and $\zo'=$ RRRURURRUURUU shown in Figure~\ref{sameMvalue} have the same value under the function $M$, namely 
40199. The continued fractions are \[[1,1,2,2,2,2,1,1,2,2,1,1,2,1,1,1,1]=\frac{40199}{23549} \]and \[ [1,1,1,1,2,2,2,2,1,1,2,1,1,2,2,1,1]=\frac{40199}{24653}.\]
We further point out that, if we measure the distance from the diagonal by the number of lattice points between the lattice path and the diagonal, then  the two paths do not  have the same distance from the diagonal. It is 4 for $\zo$ and 6 for $\zo'$. Moreover, the two paths have different Lagrange numbers 
\[L(\zo)=\frac{\sqrt{16530502037}}{40189} \quad \textup{and}\quad
L(\zo')=\frac{\sqrt{16545934157}}{40547} .
\]
\begin{figure}\begin{center}
\tiny\scalebox{1.2}{\input{sameMvalue.pdf_tex}}
\caption{Two lattice paths $\zo,\zo'$ in $\cald(7,6)$ such that $M(\zo)=M(\zo')$. }
\label{sameMvalue}
\end{center}
\end{figure}

\medskip

\begin{problem}\label{prob 4} Let $\zo,\zo'\in\cald$.

 (a) Is it true that $\zo<_L\zo'$ implies 
 $\zo<_M\zo'$ or $M(\zo)=M(\zo')$ ?
 
 (b) Is it true that $\zo<_M\zo'$ implies 
 $\zo<_L\zo'$ or $L(\zo)=L(\zo')$?
 
\end{problem}

\medskip

 \begin{problem}\label{prob 5}
 Give a characterization of the cover relation in the poset $\cald(a,b)$ for any of the two orders in terms of the lattice paths. 
 
\end{problem}
As a first step towards this problem, one can ask if $\zo$ is smaller than $\zo'$  for both orders if there exist lattice paths $\za,\zb$ such that $\zo= \za $RU$ \zb$ and 
$\zo= \za$UR$\zb$.
\medskip

\begin{problem}\label{prob 6}
 Prove that 
 $\sup\{ L(\zo)\mid \zo\in \cald\} = 1+\sqrt{5}.$
\end{problem}
There is some evidence for the above conjecture from  computer calculations.

\medskip

 \begin{problem}\label{prob 7}
 By definition the set $\{L(\zo)\mid \zo\in \cald\}$ is a subset of the Lagrange spectrum, and it contains the Lagrange spectrum below 3 by Theorem~\ref{thm L}.
 What part of the Lagrange spectrum above 3 does it contain? 
  
\end{problem}
  
  Note that upper bound conjectured in Problem \ref{prob 6} is equal to $1+\sqrt5\approx 3.23607$ which lies within the fractal part of the Lagrange spectrum. Assuming the conjecture is true, can every Lagrange number below $1+\sqrt{5}$ be realized as the limit of a sequence of Lagrange numbers $L(\zo)$ of lattice paths $\zo \in\cald$ ?
  
As an example, we show that the number 3 can be realized in this way. Indeed, let $\za$ be given by the infinite continued fraction
\[
\begin{array}
 {rcl}\za
&=& [(1,1)^1,2,2,(1,1)^2,2,2,(1,1)^3,2,2,(1,1)^4,2,2,\ldots],
\end{array}\]
where the notation $(1,1)^a$ stands for the sequence $1,\ldots,1$ of $2a$ numbers 1. 
This continued fraction is associated to  the infinite lattice path
\[\zo=\textup{R}^2\textup{U\,R}^3\textup{U\,R}^4\textup{U\,R}^5\textup{U}\cdots.\]
The Lagrange number of $\za$ is equal to 
\[L(\za)=\limsup_{n\to \infty}\big([a_n,a_{n+1},\ldots]+[0,a_{n-1},a_{n-2},\ldots,a_1]\big).\]
To compute the supremum we may assume that $n$ is such that $a_n=2$. There are two cases.  Consider first the case where $a_n$ is the second number in a $2,2$ subsequence. 
\[ \big([2,(1,1)^b,2,2,(1,1)^{b+1},\ldots]+[0,2,(1,1)^{b-1},(1,1)^{b-2},2,2,\ldots,2,2,1,1]\big)\]
As $b$ goes to infinity, the first continued fraction goes to $[2,\overline{1}] = 1+[\overline{1}]=\frac{3+\sqrt{5}}{2},$ and the second continued fraction goes to $[0,2,\overline{1}]=\frac{2}{3+\sqrt{5}}$. The sum of the two terms is 
$\frac{3+\sqrt{5}}{2}+\frac{2}{3+\sqrt{5}} =\frac{18+6\sqrt{5}}{6+2\sqrt{5}} =3$.
Now consider   the case where $a_n$ is the first number in a $2,2$ subsequence.
\[ \big([2,2,(1,1)^b,2,2,(1,1)^{b+1},\ldots]+[0,(1,1)^{b-1},(1,1)^{b-2},2,2,\ldots,2,2,1,1]\big).\]
As $b$ goes to infinity, the first continued fraction goes to $[2,2,\overline{1}] = 2+\frac{2}{3+\sqrt{5}}=\frac{8+2\sqrt{5}}{3+\sqrt{5}},$ and the second continued fraction goes to $[0,\overline{1}]=\frac{2}{1+\sqrt{5}}$. The sum of the two terms is 
$\frac{8+2\sqrt{5}}{3+\sqrt{5}}+ \frac{2}{1+\sqrt{5}}=\frac{24+12\sqrt{5}}{8+4\sqrt{5}}=3$.
This shows $L(\za)=3$.

\medskip

\begin{problem}\label{prob 8}
The set in Problem~\ref{prob 7} is a union of sets $\cup_{(a,b)}\{L(\zo)\mid \zo\in\cald(a,b)\}$
and each of these subsets contains exactly one number smaller than 3. Are these subsets pairwise disjoint? Do they provide an interesting additional structure for the Lagrange spectrum?
\end{problem}


\bibliographystyle{amsplain}

\end{document}